\newtheorem{thm}{Theorem}[]
\newtheorem{cor}[thm]{Corollary}
\newtheorem{prop}[thm]{Proposition}
\newtheorem{claim}[thm]{Claim}
\theoremstyle{definition}
\theoremstyle{remark}
\numberwithin{equation}{section}
\newcommand{\bC}{{\mathbb C}}
\newcommand{\bQ}{{\mathbb Q}}
\newcommand{\bP}{{\mathbb P}}
\newcommand\Supp{{\text{\rm Supp}}}
\newcommand\lct{{\text{\rm lct}}}
\newcommand\glct{{\text{\rm glct}}}
\newcommand\coeff{{\text{\rm coeff}}}
\newcommand\vol{\text{\rm vol}}
\newcommand\Mov{{\text{\rm Mov}}}
\title{Addendum to ``The Noether inequality for algebraic threefolds"}
\author{Jungkai A. Chen, Meng Chen, Chen Jiang}
\address{\rm National Center for Theoretical Sciences, Taipei Office, and Department of Mathematics, National Taiwan University, Taipei, 106, Taiwan}
\email{jkchen@ntu.edu.tw}
\address{\rm School of Mathematical Sciences \& LMNS, Fudan University,
Shanghai 200433, People's Republic of China}
\email{mchen@fudan.edu.cn}
\address{\rm Shanghai Center for Mathematical Sciences, Fudan University, Jiangwan Campus, 2005 Songhu Road, Shanghai, 200438, People's Republic of China}
\email{chenjiang@fudan.edu.cn}
\thanks{The first author was partially supported by
National Science Council of Taiwan. The second author was
supported by National Natural Science Foundation of China (\#11571076, \#11731004). The third author was supported by Start-up Grant No. SXH1414010.}
\begin{document}
\numberwithin{equation}{section}

\maketitle

\pagestyle{myheadings} \markboth{\hfill J. A. Chen, M. Chen, and C. Jiang
\hfill}{\hfill The Noether inequality for algebraic $3$-folds\hfill}

In our previous work \cite{CCJ}, we proved a Noether-type inequality for projective $3$-fold of general type. The main theorem is \cite[Theorem 1.1]{CCJ} which states that if $p_g(X) \le 4$ or $p_g(X) \ge 21$, then $\vol(X)\geq \frac{4}{3}p_g(X)-{\frac{10}{3}}$. 

The purpose of this note is to prove the following improvement: 

\begin{thm}\label{new main} Let $X$ be a projective $3$-fold of general type and either $p_g(X)\leq 4$ or $p_g(X)\geq 11$. Then 
\begin{equation*}\vol(X)\geq \frac{4}{3}p_g(X)-{\frac{10}{3}}.
\end{equation*}

\end{thm}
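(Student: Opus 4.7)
The plan is to follow the three-case framework of \cite{CCJ} based on the dimension $d$ of the image $\Sigma=\overline{\phi_{|K_X|}(X')}$ of the canonical map (taken on a smooth model $X'$ of $X$ that resolves its indeterminacies), and to sharpen the numerical estimates precisely in the case where the threshold $p_g(X)\geq 21$ was originally required. Since the range $p_g(X)\leq 4$ is already handled by \cite[Theorem~1.1]{CCJ}, only the finite range $11\leq p_g(X)\leq 20$ needs new work.

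In the case $d=3$, the canonical map is generically finite and one obtains the classical estimate $\vol(X)\geq 2p_g(X)-6$, which is strictly stronger than $\frac{4}{3}p_g(X)-\frac{10}{3}$ as soon as $p_g(X)\geq 2$. In the case $d=1$, the canonical system induces a fibration $f\colon X'\to B$ whose general fiber $F$ is a smooth surface of general type with $p_g(F)\geq p_g(X)-g(B)$; applying Noether's inequality $K_F^2\geq 2p_g(F)-4$ on $F$ together with the usual fibration method yields the desired bound comfortably within the target range.

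The core case is $d=2$. After Stein factorization one obtains a fibration $f\colon X'\to S'$ onto a smooth surface $S'$ birational to $\Sigma$, with general fiber a smooth curve $F$ of genus $g\geq 2$. Writing $M$ for the moving part of $|K_{X'}|$, one has $M\cdot F\geq 2g-2$ and $M^3\geq (\deg\Sigma)(M\cdot F)$ up to controllable lower-order corrections. The plan is to combine the non-degeneracy bound $\deg\Sigma\geq p_g(X)-2$ with a subdivision on $g$: for $g\geq 3$ the resulting numerical inequality already delivers the target bound for every $p_g(X)\geq 2$, while for $g=2$ one refines the argument of \cite{CCJ} by more careful bookkeeping of the vertical contributions of $K_{X'}-M$ and by exploiting the restriction of $|K_X|$ to a general fiber, which carries the unique $g^1_2$.

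The main obstacle will be the subcase $d=2$, $g=2$ with $\Sigma$ a surface of minimal degree $p_g(X)-2$ (so either a rational normal scroll or, in low $p_g(X)$, the Veronese surface), since this is precisely where the estimates in \cite{CCJ} became tight and forced the hypothesis $p_g(X)\geq 21$. A sharper analysis of the base locus of $|K_X|$ and of the vertical part of $K_{X'}$ along the hyperelliptic fibration, together with the classification of surfaces of minimal degree, should suffice to lower the threshold all the way down to $p_g(X)\geq 11$.
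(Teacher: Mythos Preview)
Your proposal misidentifies the case that forces the restriction $p_g(X)\geq 21$ in \cite{CCJ}. The problematic situation is not $d=2$ with genus-$2$ curve fibers, but $d=1$: specifically, when $|K_X|$ is composed with a rational pencil whose general member is a $(1,2)$-surface (a minimal surface of general type with $K^2=1$, $p_g=2$). Your treatment of the $d=1$ case is already incorrect on its face: the claimed inequality $p_g(F)\geq p_g(X)-g(B)$ has no basis (for a pencil of $(1,2)$-surfaces over $\bP^1$ one has $p_g(F)=2$ regardless of how large $p_g(X)$ is), and Noether's inequality on the fiber combined with the ``usual fibration method'' does \emph{not} deliver the bound here --- this is exactly the case in which \cite{CCJ} needed the extra hypothesis $p_g(X)\geq 21$.

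The paper's actual argument sharpens \cite[Proposition~3.1]{CCJ}: writing $\pi^*K_X\sim_\bQ bF+D$ on a resolution $\pi\colon W\to X$ fibered over $\bP^1$ with general fiber $F$, the old bound $b<2/\glct(F_0)$ is improved to $b<1/\glct(F_0)$. For a $(1,2)$-surface $F_0$ this halves the threshold from $b<20$ to $b<10$; since the moving part of $|K_W|$ forces $b\geq p_g(X)-1$, once $p_g(X)\geq 11$ the hypothesis of the proposition (a $\pi$-exceptional divisor with $(\pi^*K_X|_F\cdot E_0|_F)>0$) must fail, and one obtains a birational model with base-point-free $\Mov|K_Y|$. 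After that, the proof of \cite[Theorem~1.2(3)]{CCJ} goes through unchanged. The key technical input is a Connectedness Lemma argument locating an exceptional divisor $E_1$ over the base curve $\Gamma_X$ with $\coeff_{E_1}\hat{E}\geq 1$ (an integer, not merely positive), which is precisely what gains the factor of $2$ in the $\glct$ bound. Your proposed refinement of the $d=2$ minimal-degree scroll analysis is aimed at the wrong target and would not close the gap for $11\leq p_g(X)\leq 20$.
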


Throughout we work over complex number $\bC$. We refer to \cite{KM} for standard concepts and definitions. We keep the same notation as in \cite{CCJ}.

The improvement that the previous condition ``$p_g(X)\geq 21$'' in \cite[Theorem 1.1]{CCJ} is relaxed to ``$p_g(X)\geq 11$'' is due to the following modification of \cite[Proposition 3.1]{CCJ}.

\begin{prop}\label{new pencil bound}
Let $X$ be a minimal projective $3$-fold of general type. Assume that there exists a resolution $\pi:W\to X$ such that $W$ admits a fibration structure $f: W\to \bP^1$. Denote by $F$ a general fiber of $f$ and $F_0$ the minimal model of $F$. Assume that
\begin{enumerate}
 \item there exists a $\pi$-exceptional prime divisor $E_0$ on $W$ such that $(\pi^*K_X|_F\cdot E_0|_F)>0$;
 \item $\pi^*K_X\sim_\bQ bF+D$ for some rational number $b$ and an effective $\bQ$-divisor $D$ on $W$.
 \end{enumerate}
 Then $b< \frac{1}{\glct(F_0)}$ where $\glct(F_0)$ is the global log canonical threshold of $F_0$ (see \cite[Definition 2.6]{CCJ}).
 \end{prop}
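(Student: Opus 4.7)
The plan is to restrict the $\bQ$-linear equivalence $\pi^*K_X \sim_\bQ bF + D$ to a general fiber $F$, push the restriction down to the minimal model $F_0$ to produce an effective $\bQ$-divisor in the class of $K_{F_0}$, and then apply the definition of $\glct(F_0)$---with condition (1) furnishing the strictness of the final inequality.

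First, write $K_W = \pi^*K_X + E_\pi$ with $E_\pi \geq 0$ and $\pi$-exceptional (available since $X$ has terminal singularities). Pick $F$ sufficiently general that $F \not\subset \Supp(D) \cup \Supp(E_\pi) \cup E_0$; then $D|_F$, $E_\pi|_F$, and $E_0|_F$ are all effective divisors on $F$, and restriction of condition (2) gives $\pi^*K_X|_F \sim_\bQ D|_F$ on $F$. Because $F$ is a fiber of $f$ we have $F|_F \equiv 0$, so adjunction yields $K_F = \pi^*K_X|_F + E_\pi|_F$. Pushing forward by $\sigma: F \to F_0$ and setting $H := \sigma_*(\pi^*K_X|_F)$, $G := \sigma_*(D|_F)$, $E := \sigma_*(E_\pi|_F)$---all effective on $F_0$---one gets $K_{F_0} = H+E$ together with $H \sim_\bQ G$. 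Consequently $A := G + E$ is an effective $\bQ$-divisor on $F_0$ with $A \sim_\bQ K_{F_0}$, so by definition of $\glct(F_0)$ the pair $(F_0, \glct(F_0) \cdot A)$ is log canonical.

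To bring the coefficient $b$ into the picture, I would pair the equivalence $\pi^*K_X \sim_\bQ bF + D$ with the $1$-cycle $\pi^*K_X \cdot E_0$. By the projection formula and $\pi_*E_0 = 0$ (since $E_0$ is $\pi$-exceptional), the triple intersection $(\pi^*K_X)^2 \cdot E_0$ vanishes, giving the identity
\[
b \cdot (\pi^*K_X|_F \cdot E_0|_F) \;=\; -\,\pi^*K_X \cdot D \cdot E_0 .
\]
Condition (1) makes the left-hand side strictly positive, so the right-hand side is strictly negative; since $\pi^*K_X$ is nef, the only components of $D$ that can contribute negatively are those supported on $E_0$ itself. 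Controlling this contribution and combining it with the log canonicity of $(F_0, \glct(F_0) \cdot A)$ should deliver the strict inequality $b < 1/\glct(F_0)$.

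The main obstacle is precisely this last conversion---upgrading the log canonicity of $(F_0, \glct(F_0) \cdot A)$ from a qualitative statement into a quantitative bound on $b$ that exploits the strict positivity furnished by condition (1). Without (1), the same argument would yield only the non-strict bound $b \leq 1/\glct(F_0)$; pinning down the multiplicity estimate along the distinguished curve coming from $E_0$ is what forces the strict inequality, and carefully separating the $\pi$-exceptional from the $\sigma$-exceptional contributions during the push-forward is the delicate bookkeeping step.
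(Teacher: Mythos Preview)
Your argument has a genuine gap, and it is not merely the ``last conversion'' you flag: the construction never couples $b$ to the log canonical threshold at all. When you restrict $\pi^*K_X \sim_\bQ bF + D$ to $F$, the term $bF$ dies and you are left with $\pi^*K_X|_F \sim_\bQ D|_F$, an equivalence that is entirely independent of $b$. Consequently your divisor $A = G + E \sim_\bQ K_{F_0}$ carries no information about $b$, and the observation that $(F_0, \glct(F_0)\cdot A)$ is log canonical is tautological---it is true for \emph{every} effective $A \sim_\bQ K_{F_0}$ by the very definition of $\glct$. The direction you need is the opposite one: to bound $b$ you must exhibit a specific effective divisor $\Delta_0 \sim_\bQ \tfrac{1}{b}K_{F_0}$ for which $(F_0,\Delta_0)$ is \emph{not} log canonical, since that forces $\tfrac{1}{b} > \glct(F_0)$. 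Your intersection identity $b\cdot(\pi^*K_X|_F\cdot E_0|_F) = -\,\pi^*K_X\cdot D\cdot E_0$ is correct, but it is a global intersection number on $W$; it does not bound any multiplicity of $A$ on $F_0$, so it cannot feed into a non-lc statement. In particular your claim that ``without (1) the same argument would yield $b \le 1/\glct(F_0)$'' is not right: without (1) the argument as written gives no bound on $b$ whatsoever.

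The paper produces the required non-lc pair by a genuinely different mechanism. It passes to a relative minimal model $\zeta:W\to W_0$ over $\bP^1$, writes $\zeta^*K_{W_0}=\pi^*K_X+\hat{E}$ with $\hat{E}$ effective $\pi$-exceptional, and uses the Connectedness Lemma applied to $\pi^*(K_X+\pi_*F)$ (together with the fact that $\pi(E_0)=\Gamma_X$ is a curve with $(K_X\cdot\Gamma_X)>0$, which is where hypothesis (1) enters) to locate a $\pi$-exceptional prime divisor $E_1$ dominating $\Gamma_X$ with $\coeff_{E_1}\hat{E}\geq 1$. Setting $w=1/b$ and $\Delta_W = -E_\pi + \pi^*\pi_*(F+wD) + (1+w)\hat{E}$, one has $K_W+\Delta_W \sim_\bQ (1+w)\zeta^*K_{W_0}$ and $\coeff_{E_1}\Delta_W \geq (1+w)\cdot 1 > 1$. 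Pushing to $W_0$ and restricting to $F_0$ yields an effective $\Delta_{W_0}|_{F_0}\sim_\bQ wK_{F_0}$ with $(F_0,\Delta_{W_0}|_{F_0})$ not log canonical, hence $w>\glct(F_0)$. The essential ingredients you are missing are the use of $\pi^*\pi_*F$ (which, unlike $F|_F$, retains large exceptional coefficients over $\Gamma_X$) and the passage through $W_0$, which lets one build a boundary in the class $wK_{F_0}$ rather than $K_{F_0}$.
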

 
In \cite[Proposition 3.1]{CCJ}, it was proved  that $ b< \frac{2}{\glct(F_0)} \le 20$ if $F_0$ is a $(1,2)$-surface. Now it can be improved to $b <10$.  By using Proposition \ref{new pencil bound} in the proof of \cite[Corollary 3.3(2)]{CCJ}, it follows that \cite[Corollary 3.3]{CCJ} can be improved as the following:

\begin{cor}\label{new base12} Let $X$ be a minimal projective $3$-fold of general type such that
$|K_X|$ is composed with a pencil of $(1,2)$-surfaces. Assume that one of the following holds:
\begin{enumerate}
\item $|K_X|$ is composed with an irrational pencil; or
\item $|K_X|$ is composed with a rational pencil and $p_g(X)\geq 11$; or
\item $X$ is Gorenstein.
\end{enumerate}
Then there exists a minimal projective $3$-fold $Y$, being birational to $X$, such that $\Mov |K_{Y}|$ is base point free.
\end{cor}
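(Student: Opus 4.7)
The plan is to follow the proof of \cite[Corollary 3.3]{CCJ} verbatim, substituting Proposition \ref{new pencil bound} for \cite[Proposition 3.1]{CCJ} at the single place where the numerical bound on $b$ enters. Only case (2) is actually affected by this change; cases (1) and (3) are handled exactly as in \cite{CCJ}, since the arguments there do not rely on the improved bound.

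For case (2), I would begin with a resolution $\pi: W \to X$ on which the movable part of $|\pi^*K_X|$ induces the pencil $f: W \to \bP^1$, with general fiber $F$ whose minimal model $F_0$ is a $(1,2)$-surface by hypothesis. Write $\pi^*K_X \sim_\bQ bF + D$ with $D$ effective; since the pencil is rational with $p_g(X)$ linearly independent sections, $b \geq p_g(X)-1$. The argument then proceeds by contradiction: assuming no minimal model $Y$ birational to $X$ has $\Mov|K_Y|$ base point free, one constructs a $\pi$-exceptional prime divisor $E_0$ on $W$ satisfying $(\pi^*K_X|_F \cdot E_0|_F) > 0$. This construction is exactly the one carried out in \cite[Corollary 3.3(2)]{CCJ} and needs no modification here.

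At this point the hypotheses of Proposition \ref{new pencil bound} are in force, giving $b < 1/\glct(F_0) \leq 10$, where $\glct(F_0)\geq 1/10$ for a $(1,2)$-surface as recorded in \cite{CCJ}. Combined with $b \geq p_g(X) - 1$, this forces $p_g(X) \leq 10$, contradicting $p_g(X)\geq 11$. The geometric content---producing $E_0$ from the failure of base-point-freeness and verifying that it meets the general fiber positively---is the main step, and is already handled in \cite{CCJ}; the present improvement is purely numerical, replacing the old bound $b < 20$ by $b < 10$ and correspondingly the threshold $p_g \geq 21$ by $p_g \geq 11$.
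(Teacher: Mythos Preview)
Your proposal is correct and matches the paper's approach exactly: the paper simply states that Corollary \ref{new base12} follows by using Proposition \ref{new pencil bound} in place of \cite[Proposition 3.1]{CCJ} in the proof of \cite[Corollary 3.3(2)]{CCJ}, and you have spelled out precisely how that substitution works. Your numerical check $p_g(X)-1\le b<1/\glct(F_0)\le 10$ is the intended one.
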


Finally, replace \cite[Corollary 3.3]{CCJ} by Corollary \ref{new base12} in the proof of \cite[Theorem 1.2(3)]{CCJ}. One sees the asserted improvement. Therefore, it is sufficient to prove the Proposition  \ref{new pencil bound}.

\begin{proof}[Proof of Proposition 2.]
Note that, according to the projection formula, the assumptions in the proposition still hold if we replace $W$ with any higher birational model over $W$ (that is, a smooth variety $W'$ with a proper birational morphism $W'\to W$) and replace $E_0$ with its proper transform.
Take $g:W_0\to \bP^1$ to be a relative minimal model of $f: W\to \bP^1$ (for the definition, see \cite[Definition 3.50]{KM}), of which the general fiber is $F_0$. Modulo a further birational modification, we may assume that $f$ factors through $g$ by a morphism $\zeta: W\to W_0$ and the support of $\pi$-exceptional divisors is simple normal crossing.
We may write
$$K_W=\pi^*K_X+E_\pi,$$
where $E_\pi=\sum_{E_i \text{: $\pi$-exc}} a_i E_i$ is an effective $\pi$-exceptional $\bQ$-divisor with $a_i>0$ for all $i$. Being a minimal model of $W$, $X$ is also a minimal model of $W_0$ and we may write
$$
\zeta^*K_{W_0}=\pi^*K_X+\hat{E},
$$
where $\hat{E}=\sum_{E_i \text{: $\pi$-exc}} b_i E_i$ is an effective $\pi$-exceptional $\bQ$-divisor.

Take a general fiber $F$ of $f$, by the assumption,
there exists a  $\pi$-exceptional prime divisor $E_0$ on $W$ such that $(\pi^*K_X|_F\cdot E_0|_F)>0$.
By the projection formula, this means that $(K_X\cdot \pi_*(E_0|_F))>0$.
In particular, $E_0|_F$ is not contracted by $\pi.$ Hence there exists a curve $\Gamma_X\subseteq X$ such that $(K_X\cdot \Gamma_X)>0$ and that
$$\Gamma_X\subseteq \pi(E_0\cap F)\subseteq \pi(E_0).$$ On the other hand, since  $\pi(E_0)$ is a subvariety of codimension at least $2$, we see $\Gamma_X= \pi(E_0)$. In particular, $\Gamma_X$ is independent of $F$, and for any general fiber $F$ of $f$, $\Gamma_X= \pi(E_0\cap F)$.

We claim the following.

\begin{claim}
There exists a $\pi$-exceptional divisor $E_1$ on $W$ such that for a general fiber $F$ of $f$,
\begin{enumerate}
\item $\pi(E_1)=\pi(E_1\cap F)=\Gamma_X$;
\item $\coeff_{E_1}(\pi^*\pi_*F-F-E_\pi)\geq 0$;

\item $\coeff_{E_1}(\pi^*\pi_*F-F)\geq 1$;
\item $\coeff_{E_1}(\hat{E})\geq 1$.
\end{enumerate}
\end{claim}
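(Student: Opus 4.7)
The plan is to take $E_1$ to be the strict transform on $W$ of the prime exceptional divisor $G$ extracted by blowing up $\Gamma_X\subseteq X$, and to verify the four conditions by a local analysis at a general point of $\Gamma_X$.

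Since $\pi(E_0\cap F)=\Gamma_X$ for every general fiber $F$, the curve $\Gamma_X$ lies in the base locus of the pencil $|M|$ on $X$ induced by $f$, where $M:=\pi_*F$. Because $X$ has terminal (hence smooth-in-codimension-$2$) singularities, $X$ is smooth at a general point of $\Gamma_X$, so the blowup $\mu:X'\to X$ of $\Gamma_X$ produces a prime $\mu$-exceptional divisor $G$, ruled over $\Gamma_X$, with discrepancy $a(G,X)=1$. By the first paragraph of the proof we may replace $W$ with a higher birational model so that $\pi$ factors as $W\xrightarrow{\rho}X'\xrightarrow{\mu}X$; define $E_1$ as the strict transform of $G$ on $W$. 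In local coordinates $x,y,z$ at a general point of $\Gamma_X=\{x=y=0\}$, the pencil $|M|$ takes the form $\{\alpha x+\beta y+\cdots=0\}_{[\alpha:\beta]\in\bP^1}$, and its strict transform on $X'$ realizes the projection $G\to\bP^1$ as the ruling. Consequently $E_1$ is horizontal for $f$, $E_1\cap F$ projects isomorphically onto $\Gamma_X$ under $\pi$ for general $F$, and (1) holds. Direct computation in the same coordinates yields $\coeff_{E_1}(\pi^*M)=\mult_{\Gamma_X}(M)\geq 1$, $\coeff_{E_1}(E_\pi)=1$, and $\coeff_{E_1}(F)=0$, so (2) and (3) follow.

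The main obstacle is (4). Since $\coeff_{E_1}(\hat E)=a(E_1,X)-a(E_1,W_0)=1-a(E_1,W_0)$, we need $a(E_1,W_0)=0$, i.e., $E_1$ is not $\zeta$-exceptional. Using $K_W=\pi^*K_X+E_\pi$ and the birationality of $\pi|_{E_1\cap F}$ onto $\Gamma_X$, compute
\[
K_W\cdot(E_1\cap F)=\pi^*K_X\cdot(E_1\cap F)+E_\pi\cdot(E_1\cap F)=K_X\cdot\Gamma_X+E_\pi\cdot(E_1\cap F).
\]
In the generic locally smooth case where $E_\pi=E_1$ near $\Gamma_X$, the second term vanishes and $K_W\cdot(E_1\cap F)=K_X\cdot\Gamma_X>0$, so no relative $K_W$-negative extremal ray is contained in $E_1$, and $E_1$ is not divisorially contracted in the relative MMP $\zeta:W\to W_0$; hence $E_1$ is not $\zeta$-exceptional and (4) holds. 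The general case requires controlling contributions to $E_\pi\cdot(E_1\cap F)$ from other $\pi$-exceptional divisors (such as $E_0$) meeting $E_1\cap F$, using the simple normal crossing assumption on the $\pi$-exceptional locus and the positivity supplied by the assumption $(\pi^*K_X|_F\cdot E_0|_F)>0$.
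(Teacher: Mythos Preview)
Your construction of $E_1$ as the exceptional divisor of the blowup of $\Gamma_X$ does not work in general, because the claim that this divisor is horizontal for $f$ (which is what condition (1) requires) is not justified. You assert that the pencil $|M|$ has the local form $\{\alpha x+\beta y+\cdots=0\}$ near a general point of $\Gamma_X$, i.e.\ that the linear parts of the members span a two-dimensional space; but nothing in the hypotheses forces this, and all members of $|\pi_*F|$ may share the same tangent direction along $\Gamma_X$. For a local model, take $\Gamma_X=\{x=y=0\}$ and the pencil $M_t=\{x+ty^2=0\}$. Blowing up $\Gamma_X$ (chart $x=uy',\,y=y'$), the strict transforms $\{u+ty'=0\}$ still all pass through the curve $\{u=y'=0\}$; after one further blowup the strict transform of the first exceptional divisor lies entirely in the fiber over $t=\infty$, hence is vertical. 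In that situation your $E_1$ satisfies (2) and (3) but fails (1), so the later step ``$E_1\cap F\neq\emptyset$'' collapses. Your argument for (4) is also incomplete, as you acknowledge: even in the ``generic locally smooth case'' the equality $E_\pi\cdot(E_1\cap F)=0$ is a normal-bundle degree with no reason to vanish, and positivity of $K_W$ on the single curve $E_1\cap F$ does not prevent $E_1$ from being flipped and then contracted at a later stage of the relative MMP.

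The paper avoids any explicit blowup and instead applies the Connectedness Lemma to $K_W+F+\Delta$ with $\Delta=\pi^*\pi_*F-F-E_\pi$, which is $\pi$-pulled back from $-(K_X+\pi_*F)$. Either some $\pi$-exceptional divisor over $\Gamma_X$ already has coefficient $\geq 1$ in $\Delta$, and connectedness of the non-klt locus over each point of $\Gamma_X$ forces it to meet $F$, hence to be horizontal; or no such divisor exists, which makes $(X,\pi_*F)$ plt near $\eta_{\Gamma_X}$, and then $E_0$ itself serves as $E_1$. For (4) the paper argues by contradiction: if $E_1\not\subseteq\Supp(\hat E)$ one finds a curve $\Gamma_W\subseteq E_1\cap F$ with $(\zeta_F^*K_{F_0}\cdot\Gamma_W)>0$, so $E_1$ is not $\zeta$-exceptional, contradicting minimality of $X$ over $W_0$.
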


\begin{proof} We may write $$\pi^*\pi_*F-F =\sum_{E_i \text{: $\pi$-exc}} c_i E_i$$ for some $c_i \ge 0$.
We consider $$\Delta: = \pi^*\pi_*F-F-E_\pi=\sum_{E_i \text{: $\pi$-exc}} (c_i-a_i) E_i .$$

Since
$-\big(K_W+F+\Delta \big)=-\pi^*(K_X+\pi_*F)$,
which is $\pi$-nef and $\pi_*\big(F+\Delta \big)=\pi_*F$ is effective, by the Connectedness Lemma (see \cite[Theorem 5.48]{KM}),$$
\text{Supp}(F +\lfloor \Delta^{\geq 0} \rfloor)\cap \pi^{-1}(x)
$$
is connected for any $x \in X$. Here $\Delta^{\geq 0}$ denotes the part of $\Delta$ with non-negative coefficients.

Denote by $$G:=\sum_{\pi(E_i)=\Gamma_X, c_i-a_i \ge 1} (c_i-a_i) E_i  \le \Delta .$$
We thus conclude that 
$$
\text{Supp} (F + G)\cap \pi^{-1}(x)
$$
is connected for any general point $x\in \Gamma_X$.

Firstly, suppose that $G\neq 0$, then by construction, there  exists a prime divisor, say $E_1\subset \text{Supp} (G)$, on $W$ such that $$\coeff_{E_1}(\pi^*\pi_*F-F-E_\pi)=c_1-a_1 \geq 1$$ and that $E_1$ intersects $F$ 
along $\pi^{-1}(x)$ for any general point $x\in \Gamma_X$. The latter one implies that 
$\pi(E_1\cap F)=\Gamma_X$. Hence $E_1$ satisfies $(1)\sim (3)$.
  

Now suppose that $G= 0$, then there is no exceptional divisor over  $\Gamma_X$ with discrepancy smaller than $-1$,  which implies that   $(X, \pi_*F)$ is plt (i.e. purely log terminal) in a neighborhood of $\eta_{\Gamma_X}$, where $\eta_{\Gamma_X}$ is the generic point of $\Gamma_X$ (see \cite[Definition 2.34]{KM} for the definition of plt). In particular, $\pi_*F$ is normal in a neighborhood of $\eta_{\Gamma_X}$ by \cite[Proposition 5.51]{KM}. Hence $\pi|_F:F\to \pi(F)$ is an isomorphism in a neighborhood of $\eta_{\Gamma_X}$ since it is a point of codimension $1$ in $\pi(F)$. On the other hand, $\eta_{\Gamma_X}$ is a smooth point on $X$ since $X$ is terminal (\cite[Corollary 5.18]{KM}), so by adjunction we have
$$ K_F+ \Delta|_F= \big( \pi^*(K_X+\pi_*F)\big) |_F =(\pi|_F)^* \big((K_X+\pi_*F)|_{\pi(F)} \big)
=(\pi|_F)^*K_{\pi(F)}
$$
 in a neighborhood of $\eta_{\Gamma_X}$.
 This implies that $\Delta|_F=0$ over a neighborhood of $\eta_{\Gamma_X}$.
In particular, since $\pi(E_0)=\pi(E_0\cap F)=\Gamma_X$, we have $\coeff_{E_0}(\Delta)=0$.
 We may take  $E_1=E_0$ which satisfies the requirement $(1)\sim(2)$ of the claim.
 The third statement follows from (2) and the fact  that  $X$ has terminal singularities and hence smooth at $\eta_{\Gamma_X}$, which implies that $\coeff_{E_1}E_\pi$ is a positive integer.

In the end we prove the forth statement. Note that $\zeta(E_1)$ is of dimension at least 1 since it intersects general fibers of $g$, also note that $W_0$ has terminal singularities, hence $W_0$ is smooth  at the generic point of $\zeta(E_1)$. Also recall that $X$ is smooth at $\eta_{\Gamma_X}$, hence $\coeff_{E_1}\hat{E}$ is a non-negative integer. So it suffices to show that  $E_1\subseteq \Supp(\hat{E}).$ Assume, to the contrary, that $E_1\not\subseteq \Supp(\hat{E})$. Then as  $F$ is a general fiber of $f$, $E_1|_F$ has no common component with $\Supp(\hat{E}|_F)$. Since $\Gamma_X=\pi(E_1\cap F)$, we can find a curve $\Gamma_W\subseteq E_1\cap F$ such that $\Gamma_X=\pi(\Gamma_W)$ and $\Gamma_W\not\subseteq \Supp(\hat{E}|_F)$.
Recall that $(K_X\cdot \Gamma_X)>0$.
Hence
\begin{align*}
(\zeta_F^*K_{F_0}\cdot \Gamma_W){}&=(\zeta^*K_{W_0}|_F\cdot \Gamma_W)\\
{}&= ((\pi^*K_{X}|_F+\hat{E}|_F)\cdot \Gamma_W)\\
{}&\geq (\pi^*K_{X}|_F\cdot \Gamma_W)\\
{}&=(K_X\cdot \pi_*\Gamma_W)>0,
\end{align*}
here $\zeta_F=\zeta|_F:F\to F_0$.
In particular, $\Gamma_W$ is not contracted by $\zeta_F$ and hence $E_1$ is not contracted by $\zeta$ as $F$ is general. This means that $E_1$ is a divisor exceptional over $X$ but not exceptional over $W_0$, and therefore $\coeff_{E_1}\hat{E}>0$ as $X$ is a minimal model of $W_0$ (see \cite[Definition 3.50($5^m$)]{KM})), a contradiction.
\end{proof}

Now go back to the proof of the proposition. Take $w=1/b$. Denote
$$
\Delta_W:=-E_\pi+\pi^*\pi_*(F+wD)+(1+w)\hat{E}.
$$
Note that $F+wD \sim \pi^* wK_X$ and hence 
$ F+wD= \pi^* \pi_* (  F +  wD)$. 
Then
$$ K_W+\Delta_W\sim_\bQ (1+w)\pi^*K_X+(1+w)\hat{E}\sim_\bQ(1+w)\zeta^*K_{W_0}.$$
We may write
$$ K_W+\Delta_W=\zeta^*(K_{W_0}+\Delta_{W_0}),$$
where $\Delta_{W_0}=\zeta_*\Delta_{W}\sim_\bQ wK_{W_0}$. Also note that $$\Delta_{W_0}=\zeta_*\Delta_{W}=\zeta_*(\Delta_{W}+E_\pi-\hat{E})\geq 0$$ since $E_\pi-\hat{E}=K_W-\zeta^*K_{W_0}$ is $\zeta$-exceptional.
Restricting on a general fiber $F$ of $f$, we have
$$ K_F+\Delta_W|_F=\zeta_F^*(K_{F_0}+\Delta_{W_0}|_{F_0}),$$
where $\zeta_F=\zeta|_F:F\to F_0$.
By the construction of $E_1$,
\begin{align*}
\coeff_{E_1}\Delta_W\geq (1+w)\coeff_{E_1}\hat{E}>1
\end{align*}
and $E_1\cap F\neq \emptyset$, hence $\Delta_W|_F$ contains a component with coefficient $> 1$.
This implies that $(F_0, \Delta_{W_0}|_{F_0})$ is not lc. On the other hand, $\Delta_{W_0}|_{F_0}\sim_\bQ wK_{F_0}$,
hence
$$
\frac{1}{b}=w> \lct\Big(F_0;\frac{1}{w}\Delta_{W_0}|_{F_0}\Big)\geq \glct(F_0).
$$
\end{proof}

\end{document}